\documentclass[11pt]{article}
\usepackage{amsfonts}
\usepackage{amsmath}
\usepackage{pdfsync}
\usepackage{enumerate}

\usepackage[all]{xy}
\usepackage[english]{babel}
\usepackage[latin1]{inputenc}
\usepackage[T1]{fontenc}
\usepackage{graphicx,color}
\usepackage{xcolor}
\colorlet{blu1}{blue!70!black}
\colorlet{blu2}{blue!50!black}
\colorlet{blu3}{blue!70!red}
\colorlet{blu4}{blue!60!green}
\colorlet{red1}{red!80}
\colorlet{red2}{red!50!black}
\colorlet{red3}{red!70!yellow}
\colorlet{red4}{red!50!yellow}
\colorlet{yel1}{yellow!50!black}
\colorlet{yel3}{yellow!20!blue}
\colorlet{gre1}{green!60!blue}
\colorlet{gre2}{green!60!black}
\colorlet{gre3}{green!40!black}

\newtheorem{theorem}{Theorem}
\newtheorem{definition}{Definition}
\newtheorem{lemma}{Lemma}
\newtheorem{remark}{Remark}

\newtheorem{proposition}{Proposition}
\newtheorem{example}{Example}

\newtheorem{proof}{Proof}

\numberwithin{equation}{section}

\title{An approach to non-equilibrium Markov chains through cycle matrices}
\author{Cruz de la Rosa M.A. \& Guerrero-Poblete F.\\
\small Universidad Aut\'onoma Metropolitana,  Campus Iztapalapa.  \\
\small CDMX. M\'exico. \\
\small \texttt{marko@xanum.uam.mx}\\
\small \texttt{poblete@xanum.uam.mx}
}

\date{}
\begin{document}

\maketitle

\begin{abstract}
\noindent Analogously to the quantum case considered in \cite{Marco-Fer-I}, this work proposes a graph-theoretic approach to studying non-equilibrium properties in Markov chains. We prove that the kernel of the incidence matrix associated with the interaction graph of the chain, which consists of cycles, is isomorphic to the space of anti-symmetric matrices with rows sum to zero.
The main contribution of this work is the introduction of the called cycle matrices, which constitute a basis for the space of matrices that describe the non-equilibrium. 
\end{abstract}

\noindent \textbf{Keywords:} Interaction graph; non-equilibrium; cycle matrices; circulant matrices.\\
\noindent \textbf{Mathematics Subject Classification codes:} 60J27.

\section{Introduction}\label{sec:Introd}

\noindent The study of continuous-time Markov chains is a central topic in stochastic process theory. The equilibrium in Markov chains has been widely studied, nevertheless, its counterpart, the non-equilibrium has, until now, not been studied as much. 

It is well known that equilibrium is equivalent to detailed balance condition. The \textit{non-equilibrium} is given by a matrix belonging to the space $\mathcal{M}$ of anti-symmetric matrices with rows that sum to zero.

Analogously to the quantum case considered in \cite{Marco-Fer-I}, this work proposes a graph-theoretic approach to studying non-equilibrium properties in Markov chains. We prove that the kernel of the incidence matrix of the interaction graph and the space $\mathcal{M}$ are isomorphic. In order to obtain a better description of the matrix representing the non-equilibrium, it is convenient to write it in terms of a basis. Since the kernel of an incidence matrix is generated by cycles, we provide a basis for $\mathcal{M}$ in terms  of the images of the cycles under the isomorphism; such matrices will be called \textit{cycle matrices}. 

Once the relationship between the cycles and the non-equilibrium has been established, we focus our attention on certain cycles of intrinsic interest, namely, the Hamiltonian cycles. We give a rule to generate some of them, those that we called \textit{$k$-Hamiltonian cycles}, from these, we introduce the notion of $k$-non-equilibrium for Markov chains and prove that the image of these cycles under the isomorphism are differences between elements in the unitary group of circulant matrices. Finally, we give a complete description of
the $1$-non-equilibrium invariant distribution. 

The paper is organized as follows: in Section 2 we recall the standard notation on Markov chain, we defined the notion of non-equilibrium  and the interaction graph for continuous-time Markov chains. In Section 3 we introduce the notions of k-non-equilibrium and cycle matrices. In Section 4, we give the explicit form of the $1$-non-equilibrium invariant distribution and conclude with an example of a four-state chain.

\section{Non-equilibrium chains and interaction graph}

\noindent Following the standard definitions in Markov chains theory, we shall denote by $q_{ij}$ to the matrix elements of the generator $Q$ and by $\pi_{i}$ to $i$-th element of the invariant distribution $\pi$. Remember that a distribution $\pi$ is invariant if and only if  $\pi Q=0$ and the infinitesimal generator satisfies 
(see \cite{Norris}).
\begin{enumerate}
\item $\displaystyle q_{ij}\geq 0$ for all  $i\neq j$. 
\item $\displaystyle q_{ii}=-\sum_{j: j\neq i}q_{ij}$ for all  $i\in S$. 
\end{enumerate}
\noindent where $S$ is the state space. Next, the definition of detailed balance for a continuous-time Markov chain.
\begin{definition} Given a continuous-time Markov chain $\{X_t\}_{t\geq 0}$ with state space $S$, infinitesimal generator $Q$
and invariant distribution $\pi$. We say that the chain satisfies the \textit{detailed balance condition} if for all $i\neq j$,
\begin{equation}
\label{balance-detallado}\pi_{i} q_{ij}=\pi_{j}q_{ji}
\end{equation}
\end{definition}

\noindent  Equivalently, detailed balance means that for all $i\neq j$,  $\pi_{i} q_{ij}-\pi_{j}q_{ji}=0$. These quantities are known as currents. It is well-established that both conditions, equilibrium and detailed balance are equivalent, see \cite{Stroock} for more details.

%%%%%%%%%%%%%%%%%%%%%%%%%%%%%%%%%%%%%%%%%%%%%%%%%%%%%%%%%%%%%%%%%%%%%%%%%%%%%%%%%

\noindent From now on, we will assume that the state space $S$ has cardinality $N$, and for all $i\neq j$
we have $q_{ij}>0$. Thus, the condition $\pi Q=0$ is equivalent to (see \cite{Marco-Fer})

\begin{eqnarray}\label{Currents-finite}
\sum_{i\neq j}(\pi_{i}q_{ij}-\pi_{j}q_{ji})=0, \qquad \forall j\in S.
\end{eqnarray}

\noindent By denoting $J_{ij}:=(\pi_{i}q_{ij}-\pi_{j}q_{ji})$ and noticing  that $J_{ji}=- J_{ij}$, equation $(\ref{Currents-finite})$ is equivalent to
\begin{eqnarray} \label{Currents-Jij}
\sum_{i=1}^{j-1}J_{ij}-\sum_{i=j+1}^{N}J_{ji}=0\qquad \forall j\in S.
\end{eqnarray} 

\noindent  Let us denote by $\Pi$ the $\footnotesize{N\times N}$ diagonal matrix with entries $\Pi_{i,i}=\pi_{i}$. So, it is clear that detailed balance condition $(\ref{balance-detallado})$ can be written in matrix form as 
\begin{eqnarray}\label{BD-Matriz}
\Pi Q - (\Pi Q)^{t}=0
\end{eqnarray}
\noindent
Since the equilibrium is equivalent to currents equal to zero, the opposite (non equilibrium) means at least one of them is non-zero; formally.

\begin{definition}
Given a continuous-time Markov chain $\{X_t\}_{t\geq 0}$ with finite state space $S$, infinitesimal generator $Q$
and invariant distribution $\pi$,
we say that it is a non-equilibrium Markov chain if and only if
\begin{eqnarray}\label{No-BD-Matriz}
\Pi Q - (\Pi Q)^{t}=D, \qquad D\neq 0.
\end{eqnarray}
\end{definition}

\noindent As one would expect, the matrix $D$ above is not arbitrary and has certain properties, some of which are mentioned below. 

\begin{lemma}\label{D}
The matrix $D$ in $(\ref{No-BD-Matriz})$ fulfills
\begin{enumerate}[i)]
\item $D^{t}=-D$, i.e., it is antisymmetric.
\item $\displaystyle \sum_{j}D_{ij}=0$, \ $\forall i \in S$.
\end{enumerate}
\end{lemma}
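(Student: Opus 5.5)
Both properties follow from writing the entries of $D$ explicitly. Since $\Pi$ is diagonal, $(\Pi Q)_{ij}=\pi_i q_{ij}$. Hence, by $(\ref{No-BD-Matriz})$,
\[
D_{ij}=\pi_i q_{ij}-\pi_j q_{ji}.
\]
For $i\neq j$ this is exactly the current $J_{ij}$, and on the diagonal $D_{ii}=\pi_i q_{ii}-\pi_i q_{ii}=0$.

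For i), transposing the defining relation gives
\[
D^{t}=(\Pi Q)^{t}-\big((\Pi Q)^{t}\big)^{t}=(\Pi Q)^{t}-\Pi Q=-D.
\]
This uses only linearity of transposition and $(A^t)^t=A$, so antisymmetry is immediate.

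For ii), fix $i$ and sum the entries of row $i$:
\[
\sum_j D_{ij}=\pi_i\sum_j q_{ij}-\sum_j \pi_j q_{ji}.
\]
The first sum vanishes by the generator property $q_{ii}=-\sum_{j\neq i}q_{ij}$, so every row of $Q$ sums to zero. The second sum is the $i$-th entry of $\pi Q$, which vanishes because $\pi$ is invariant. Hence each row of $D$ sums to zero. Equivalently, this row sum is the current balance $(\ref{Currents-finite})$, with the roles of the indices $i$ and $j$ renamed. By i), the column sums of $D$ also vanish.

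There is no genuine obstacle. The only care needed is in ii), namely recognising that the row sums of $D$ combine two separate facts: the zero row sums of $Q$ and the invariance $\pi Q=0$.
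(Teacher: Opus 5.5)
Your proof is correct and is essentially the paper's argument: part i) is the same transpose computation, done at the matrix level rather than entrywise. For ii) the paper takes the vanishing column sums from (\ref{Currents-finite}) and transfers them to the rows by antisymmetry, whereas you compute the row sums directly from the zero row sums of $Q$ and from $\pi Q=0$ (which is just (\ref{Currents-finite}) unpacked), so you do not need i) at that step.
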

\begin{proof}
\noindent Item $i)$  \[D_{ij}^{t}=\pi_{j}q_{ji}-\pi_{i}q_{ij}=-(\pi_{i}q_{ij}-\pi_{j}q_{ji})=-D_{ij}\]
Item $ii)$ follows from item $i)$ and equation $(\ref{Currents-finite})$.
\end{proof}

\subsection{Interaction graph} 

\noindent  The concept of \textit{interaction graph} was introduced first for quantum Markov semigroups of weak coupling limit type in \cite{AccardiFQ}, latter in \cite{Marco-Fer} for continuous-time Markov chains. Namely, given a Markov chain  $\{X_t\}_{t\geq 0}$ with finite state space $S$ and infinitesimal generator $Q$, the interaction graph is defined as the digraph $G(V,E)$, where $V=S$  and $E=\{(i,j)\in S\times S: i<j\}$. 

Notice that, due to our assumption, $q_{ij}>0$ for all $i\neq j$, the interaction graph is complete; hence it has $\binom{N}{2}$ edges. The \textit{incident matrix} of the interaction graph will be denoted by $\Gamma$; in the column indexed by the edge $(i,j)$ such a matrix has $-1$ in the $i$-th row, $1$ in the $j$-th row and $0$ in other case. For any purpose of ordering we will use the lexicographical order,
for such order the function $\theta_N:E\to \{1,\dots,\binom{N}{2}\}$ given by 
\begin{eqnarray}
\theta(i, j)=S(i)+j-i, \quad \text{where}\  S(i):=\sum_{t=1}^{i-1}(N-t)=(i-1)\big(N-\frac{i}{2}\big)\nonumber
\end{eqnarray}
is increasing and bijective  (see \cite{Marco-Fer-Julio} for more details).

We define the \textit{ vector currents} as the vector $J\in \mathbb{R}^{|E|}$, whose entries are precisely the currents  $J_{ij}$ in $(\ref{Currents-Jij})$, namely
$J=\left(J_{12},\cdots, J_{N-1\,N}\right)^{t}$. \\
A crucial fact,  proof of which can be found in \cite{Marco-Fer}, is that $\pi Q=0$ if and only if $\Gamma J=0$. It is well know that cycles in a graph belong to the kernel of its incidence matrix, see \cite{Chartrand}.
Since the minimum length of a cycle is three, for simplicity we seek for cycles of such a length to form a basis for the kernel of the incidence matrix. We will denote by $C_{(i,j,k)}$ to the cycle connecting the vertices $i,j,k$.

Another well known fact is that the rank of an incidence matrix associated with a graph with
$N$ vertices is $N-p$ where $p$ is the number of connected components (see \cite{Busacker}), so, if the graph is complete, i.e., if has $\binom{N}{2} $ edges, its rank is $N-1$ dimensional, hence, its kernel has dimension $\binom{N-1}{2}$.

The following theorem gives a basis for $Ker(\Gamma)$ (see \cite{Marco-Fer-Julio} for a proof).  
\begin{theorem}
Let $\mathcal{C}:=\left\{C_{(i,i+1,i+1+j)}:\, 1\leq i\leq N-2,\, 1\leq j\leq N-(i+1) \right \}$ be a set of minimum length cycles; which, for $r\in \{1,2,\dots, \binom{N}{2}\}$, have coordinates 
\begin{equation}
C_{(i, i+1,i+1+j)}(r)=\left\{
              \begin{array}{rcccl}
                1 & &\text{if} && r=(i-1)(N-\frac{i}{2})+1, \\
                  & &\text{or} && r=i(N-\frac{i+1}{2})+j, \\
                -1 &&\text{if} &&r=(i-1)(N-\frac{i}{2})+j+1, \\
                0 &&  &&\text{other case.}
              \end{array}
            \right.\label{tray cerradas}
\end{equation}
then $\mathcal{C}$ is a basis for $Ker(\Gamma)$.
\end{theorem}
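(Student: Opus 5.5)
Three things have to be shown: each element of $\mathcal{C}$ lies in $Ker(\Gamma)$, the family has $\binom{N-1}{2}$ members, and it is linearly independent. Since the rank of $\Gamma$ is $N-1$ (the interaction graph is complete, hence connected), $\dim Ker(\Gamma)=\binom{N}{2}-(N-1)=\binom{N-1}{2}$, so these three facts give the theorem.

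\textbf{Decoding the coordinates.} I first use $\theta(i,j)=S(i)+j-i$ with $S(i)=(i-1)(N-\frac{i}{2})$ to read off which edges carry the nonzero entries. The index $r=(i-1)(N-\frac{i}{2})+1=\theta(i,i+1)$ gives the edge $(i,i+1)$. Next, $i(N-\frac{i+1}{2})=S(i+1)$, so $r=S(i+1)+j=\theta(i+1,i+1+j)$ gives the edge $(i+1,i+1+j)$. Finally $r=S(i)+j+1=\theta(i,i+1+j)$ gives the edge $(i,i+1+j)$, and it carries the entry $-1$. So $C_{(i,i+1,i+1+j)}=e_{(i,i+1)}+e_{(i+1,k)}-e_{(i,k)}$ with $k=i+1+j$.

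\textbf{Membership in the kernel.} Each column of $\Gamma$ is $-\epsilon_a+\epsilon_b$ for the edge $(a,b)$, where $\epsilon$ denotes the standard basis of $\mathbb{R}^N$. Then
$$\Gamma C_{(i,i+1,k)}=(-\epsilon_i+\epsilon_{i+1})+(-\epsilon_{i+1}+\epsilon_k)-(-\epsilon_i+\epsilon_k)=0.$$
Note that $j$ ranges over $1\le j\le N-i-1$, so $k$ ranges over $i+2,\dots,N$, and distinct pairs $(i,j)$ give distinct triples. The count is $\sum_{i=1}^{N-2}(N-i-1)=\binom{N-1}{2}$.

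\textbf{Linear independence.} This is the step needing care. Each cycle $C_{(i,i+1,k)}$ contains the edge $(i,k)$ with $k\ge i+2$; I call it its distinguished edge, and it is not of the form $(m,m+1)$. I then check that no other cycle in $\mathcal{C}$ uses this edge.
\begin{itemize}
\item Another cycle $C_{(i',i'+1,k')}$ uses the edges $(i',i'+1)$, $(i'+1,k')$ and $(i',k')$.
\item The edge $(i',i'+1)$ is consecutive, so it cannot equal $(i,k)$.
\item If $(i',k')=(i,k)$, the triples coincide.
\item The remaining case is $(i'+1,k')=(i,k)$, i.e.\ $i'=i-1$ and $k'=k$, which is the cycle $C_{(i-1,i,k)}$. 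This really does contain the edge $(i,k)$, so distinguished edges are not private and a naive argument fails.
\end{itemize}
To handle this I order the cycles by decreasing $i$. Suppose $\sum c_{i,k}C_{(i,i+1,k)}=0$ and look at the coordinate of edge $(1,k)$ for $k\ge3$. Only $C_{(1,2,k)}$ has $1$ as a smaller endpoint in a non-consecutive edge: $C_{(0,1,k)}$ does not exist, and the only other edge containing vertex $1$ is $(1,2)$. So $c_{1,k}=0$ for all $k$. Removing these cycles and inducting on $i$, the coordinate of $(i,k)$ now receives a contribution only from $C_{(i,i+1,k)}$, because $C_{(i-1,i,k)}$ has already been eliminated. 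Hence all coefficients vanish.

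Thus $\mathcal{C}$ is a linearly independent set of $\binom{N-1}{2}=\dim Ker(\Gamma)$ vectors in $Ker(\Gamma)$, and therefore a basis. The main obstacle is the overlap $C_{(i-1,i,k)}$ on the edge $(i,k)$, which the triangular elimination in increasing $i$ resolves.
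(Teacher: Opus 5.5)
Your proof is correct. Decoding the indices through $\theta$, and in particular using $i(N-\frac{i+1}{2})=S(i+1)$, gives $C_{(i,i+1,k)}=e_{(i,i+1)}+e_{(i+1,k)}-e_{(i,k)}$ with $k=i+1+j$, which agrees with the paper's $N=4$ example. The kernel check and the count $\binom{N-1}{2}$ are also right. The elimination is sound as well: for $k\ge i+2$, the coordinate of the edge $(i,k)$ in $\sum c_{i',k'}C_{(i',i'+1,k')}$ is $c_{i-1,k}-c_{i,k}$, with the convention $c_{0,k}:=0$. This triangular system forces every coefficient to vanish.

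The paper does not prove this theorem itself; it defers to an earlier reference. The only ingredient it supplies is the fact you also use, namely that the rank of $\Gamma$ is $N-1$, so that $\dim Ker(\Gamma)=\binom{N-1}{2}$. Your argument therefore gives a self-contained proof of what the paper outsources. You also correctly flag that the triangles are not the fundamental cycles of a single spanning tree, since $C_{(i-1,i,k)}$ and $C_{(i,i+1,k)}$ share the edge $(i,k)$, so an ordered elimination is genuinely needed.

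One small wording slip: you announce that you order the cycles by decreasing $i$. The elimination you actually carry out, and correctly describe in your last sentence, proceeds by increasing $i$, starting from $i=1$.
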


\section{$k$-non-equilibrium}

\noindent As a first step toward the characterization of non-equilibrium distributions in time-continuous Markov chains, since the kernel of the $\Gamma$ matrix consists of cycles, we will focus our attention on certain types of them with intrinsically interesting properties; the Eulerian and Hamiltonian cycles are some of them. Next, in order to handle such cycles, we give a rule to generate them. From now on, we shall use the notation $[i]:=i\, mod\, N$.

\begin{definition}
We shall call \textbf{k-closed-path} to the one that is composed by the sequence $[k],[2k],\dots,[Nk], [k]$. Furthermore, if this path is a cycle (a closed path that does not repeat vertices) it will be called \textbf{$k$-cycle} and denoted by $C_{k}$.  
\end{definition}
Indeed, a $k$-closed-path is not always a $k$-cycle; for example, for $N=6$ and $k=2$ the sequence $[2],[4],[6],[8],[10],[12],[2]$ is a $2$-closed-path but not a $2$-cycle. The following proposition gives a condition for the closed-path to become a cycle, moreover a Hamiltonian cycle, that is, each vertex in the graph is visited exactly once.

\begin{proposition}  \label{com_conn}
The k-closed-path $[k],[2k],\dots,[Nk], [k]$  is a Hamiltonian cycle if and only if $gcd(k,N)=1$.
\end{proposition}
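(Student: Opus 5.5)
The plan is to reduce the statement to the elementary fact that multiplication by $k$ is a bijection of $\mathbb{Z}/N\mathbb{Z}$ exactly when $k$ is invertible modulo $N$. I identify the vertex set $S=\{1,\dots,N\}$ with the residues modulo $N$, with $N$ identified with residue $0$, so that $[N]$ stands for the vertex $N$. The $k$-closed-path visits the $N$ vertices $[k],[2k],\dots,[Nk]$ and then returns to $[k]$. Since the interaction graph is complete, every pair of distinct consecutive vertices is joined by an edge. The path is therefore a Hamiltonian cycle exactly when the $N$ listed vertices $[mk]$, $1\le m\le N$, are pairwise distinct. That is, the map $\phi_k:\mathbb{Z}/N\mathbb{Z}\to\mathbb{Z}/N\mathbb{Z}$, $m\mapsto [mk]$, must be injective, and hence bijective, so that every vertex is visited exactly once. (For the cycle to be genuine we also need $N\ge 3$, which is implicit since minimal cycles have length three.)

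For the direction $\gcd(k,N)=1\Rightarrow$ Hamiltonian, suppose $[mk]=[m'k]$ with $1\le m<m'\le N$. Then $N\mid (m'-m)k$. Because $\gcd(k,N)=1$, Euclid's lemma (or B\'ezout: $ak+bN=1$) gives $N\mid m'-m$. This is impossible, since $0<m'-m<N$. Hence the $N$ vertices are distinct, the path is a cycle through all of $S$, and it closes up because $[(N+1)k]=[k]$.

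For the converse, let $d=\gcd(k,N)>1$ and put $m=N/d<N$. Then $mk=N\cdot(k/d)\equiv 0=Nk \pmod N$, so $[mk]=[Nk]$ with $m<N$. A vertex therefore repeats among the first $N$ entries, and the closed path is not a cycle, let alone a Hamiltonian one. This is exactly what happens in the example $N=6$, $k=2$, where $m=3$.

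There is no real obstacle here. The only point needing care is bookkeeping: $[\cdot]$ takes values in $\{0,\dots,N-1\}$, whereas the vertices are labelled $1,\dots,N$, so I will state the convention $[N]\equiv N$ once and also confirm that consecutive vertices are distinct, so that the edges exist. Distinctness follows from the injectivity already established.
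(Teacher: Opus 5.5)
Your proof is correct and follows essentially the same route as the paper. Both reduce the claim to the fact that $k$ generates the additive group $\mathbb{Z}/N\mathbb{Z}$, equivalently $\{[k],\dots,[Nk]\}=S$, if and only if $\gcd(k,N)=1$; the paper simply cites that fact and spells out only the ``if'' direction, whereas you prove it via Euclid's lemma, make the converse explicit with $[(N/d)k]=[Nk]$, and add the $[N]\equiv N$ convention and the $N\ge 3$ caveat, which the paper leaves implicit.
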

\begin{proof}
It is well known that the cyclic group $\mathbb{Z}/\,N\mathbb{Z}$, i.e., the set $\{1, 2, \cdots,N\}$ joint the operation $i\cdot j:=[i+j]$, has $\psi(N)$ generators, where $\psi$ is the Euler function, and $k\in \mathbb{Z}/\,N\mathbb{Z}$ is a generator for the group if and only if $gcd(k,N)=1$.  
Hence the state space $S=\{1, 2,\dots, N\}=\{[k],[2k],\dots,[Nk]\}$, i.e., each vertex is visited by the sequence $[k],[2k],\dots,[Nk],[k]$ and each one of the edges is different, so the k sequence is a Hamiltonian cycle. 
\end{proof}

\subsection{Cycle Matrices}

\noindent Since the non equilibrium is given by $(\ref{No-BD-Matriz})$ where the matrix $D$ satisfies properties in Lemma $\ref{D}$, in order to obtain a deeper understanding of non-equilibrium  through the structure of $D$, we shall state that $Ker(\Gamma)$ and the space of antisymmetric matrices with rows that sum to zero have the same dimension and therefore are isomorphic. The vector with all its entries equal to one will be denoted by $\mathbf{1}$, i.e., $\mathbf{1}:=(1,\dots, 1)^{t}$.

\begin{theorem}
Let $\mathcal{M}:=\{A: A^{t}=-A,\, A\mathbf{1}=0\}$ be the space of antisymmetric real matrices of size $\footnotesize{N\times N}$ with rows that sum to zero, then $dim(\mathcal{M})=\binom{N-1}{2}$. 
\end{theorem}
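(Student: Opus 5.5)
We prove the dimension count by exhibiting $\mathcal{M}$ as the kernel of a surjective linear map defined on the space of all antisymmetric matrices, and then applying rank--nullity. Let $\mathcal{A}$ denote the space of real antisymmetric $N\times N$ matrices. An element of $\mathcal{A}$ is determined freely by its strictly upper-triangular entries $A_{ij}$, $i<j$. Hence $\dim \mathcal{A}=\binom{N}{2}$, and a basis is given by the matrices $E_{ij}-E_{ji}$, $i<j$, which we may index by the edges of the interaction graph through $\theta_N$.

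Next we consider the linear map $T:\mathcal{A}\to\mathbb{R}^N$ given by $T(A)=A\mathbf{1}$, so that $\mathcal{M}=Ker(T)$. The image of $T$ lies in the hyperplane $H=\{v\in\mathbb{R}^N:\mathbf{1}^{t}v=0\}$. Indeed, antisymmetry gives
$$\mathbf{1}^{t}A\mathbf{1}=(\mathbf{1}^{t}A\mathbf{1})^{t}=\mathbf{1}^{t}A^{t}\mathbf{1}=-\mathbf{1}^{t}A\mathbf{1},$$
so this scalar vanishes. To show that $T$ is onto $H$, we note that $T(E_{ij}-E_{ji})=e_i-e_j$. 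The vectors $e_1-e_j$, $j=2,\dots,N$, are linearly independent and lie in $H$, which has dimension $N-1$, so they span $H$. Therefore $rank(T)=N-1$.

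Rank--nullity then gives
$$\dim\mathcal{M}=\binom{N}{2}-(N-1)=\frac{N(N-1)}{2}-(N-1)=\frac{(N-1)(N-2)}{2}=\binom{N-1}{2}.$$

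The same computation shows where the isomorphism with $Ker(\Gamma)$ comes from. Writing $A$ in coordinates $(A_{ij})_{i<j}$ ordered by $\theta_N$, one has $(A\mathbf{1})_j=\sum_{i>j}A_{ji}-\sum_{i<j}A_{ij}$. This is, up to sign, exactly the $j$-th entry of $\Gamma$ applied to the vector of upper-triangular entries. So $T$ is $-\Gamma$ under the identification $\mathcal{A}\cong\mathbb{R}^{\binom{N}{2}}$, and one could alternatively deduce the dimension from the earlier facts that $rank(\Gamma)=N-1$ and $\dim Ker(\Gamma)=\binom{N-1}{2}$.

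The only step needing care is verifying surjectivity onto $H$, equivalently that the rank is exactly $N-1$ and not smaller. Exhibiting the independent images $e_1-e_j$ settles it directly.
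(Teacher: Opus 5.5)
Your proof is correct and follows the paper's argument: both apply rank--nullity to $T(A)=A\mathbf{1}$ on the antisymmetric matrices, and both show the image is exactly $\mathbf{1}^{\perp}$. Your spanning vectors $e_1-e_j=T(E_{1j}-E_{j1})$ are precisely what the paper's explicit preimage (supported on the first row and column) combines. Your side remark that $T$ becomes $-\Gamma$ in the edge coordinates is also correct, and it gives a direct reason why $\phi$ maps $Ker(\Gamma)$ onto $\mathcal{M}$, which the paper only obtains from the dimension count.
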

\begin{proof}
Let $W=\{A_{N\times N}: A^{t}=-A\}$ be the space of antisymmetric real matrices. It is known that the $dim(W)=\binom{N}{2}=\frac{N(N-1)}{2}$.
Let us consider the linear map $T: W\rightarrow \mathbb{R}^{N}$, given by $T(A)=A\mathbf{1}$.
Notice that $ker T=\{A: A^{t}=-A,\, A\mathbf{1}=0\}=\mathcal{M}$,
so, $\mathcal{M}$ and $ker(T)$ have the same dimension. By Dimension Theorem, 
$dim(ker (T))= dim(W)- dim (Im T)$. We shall prove that $Im T = \mathbf{1}^{\bot}=\{x\in \mathbb{R}^{N}: \langle x, \mathbf{1} \rangle =0 \}$ and since $dim (\mathbf{1}^{\bot})=N-1$, we will conclude that

$$dim (\mathcal{M})=\binom{N}{2}-(N-1)=\binom{N-1}{2}.$$

Let $A\mathbf{1} \in Im T$, then $\langle \mathbf{1}, A\mathbf{1} \rangle= \langle A^{t}\mathbf{1}, \mathbf{1} \rangle = \langle -A\mathbf{1}, \mathbf{1} \rangle= -\langle \mathbf{1}, A\mathbf{1} \rangle.$
 So, $\langle \mathbf{1}, A\mathbf{1} \rangle=0$, that is, $A\mathbf{1}\in \mathbf{1}^{\bot}$, which proves one containment, for the other one, let $x\in \mathbf{1}^{\bot}$, we want to prove that exists $A\in W$, such that, $T(A)=A\mathbf{1}=x$. One possible choice for $A$, is
 
\begin{eqnarray}  A_{i,j}=\left\{\begin{array}{rl}
-x_{j}& \text{for}\ i=1,\ 2\leq j\leq N\\
x_{j}& \text{for}\, j=1,\ 2\leq i\leq N\\
0& \text{other case}
\end{array}\right. 
 \end{eqnarray}
 which clearly satisfies $A=-A^{t}$ and $A\mathbf{1}=x$. This finishes the proof.
\end{proof}

Let us consider the function $\phi:span(\mathcal{C})\to \mathcal{M}$, given by  
\begin{eqnarray}\label{Phi-M}
\phi(C)=M, \qquad \text{where} \quad M_{i,j}=\left\{\begin{array}{rr}
C(\theta(i,j))& \text{if}\, i<j\\
-C(\theta(j,i))& \text{if}\, i>j
\end{array}\right. 
\end{eqnarray}
It is clear that $\phi$ is linear and that $\phi(C)=0$ if and only if $C(\theta(i,j))=0$ for all $i<j$, which means that no one edge belongs to the cycle $C$, i.e., it is the cycle $0$, hence $\phi$ is injective and since $\mathcal{M}$ and $Ker(\Gamma)$ have the same dimension, $\phi$ is onto and therefore an isomorphism. 

\begin{definition}
For any cycle $C$, the matrix $M=\phi(C)$ will be called \textbf{cycle matrix}.
\end{definition}

In particular, for $1\leq i\leq N-2$ and $1\leq j\leq N-(i+1)$, we shall denote as $M_{i,i+1,i+1+j}$ the image under $\phi$ of the cycle $C_{i,i+1,i+1+j}$, such matrices have entries  $(M_{i,i+1,i+1+j})_{mn}$
\begin{equation}
=\left\{
\begin{array}{rcl}
    1 & \text{if}\, & (m,n)\in \{(i,i+1),(i+1,i+1+j),(i+1+j,i)\} \\
    -1 &\text{if}\,&(m,n)\in \{(i+1,i),(i+1+j,i+1),(i,i+1+j)\} \\
    0 &  &\text{other case.}
 \end{array}
 \right.\label{Matriz-ciclos}
\end{equation}
Since the set $\mathcal{C}$ is a basis for $Ker(\Gamma)$, the set of matrices $\{M_{i,i+1,i+1+j}:\,1\leq i\leq N-2,\, 1\leq j\leq N-(i+1)\}$ is a basis for $\mathcal{M}$; hence, equation $(\ref{No-BD-Matriz})$ can be written as 
\begin{eqnarray}\label{No-BD-Matriz-Base}
\Pi Q - (\Pi Q)^{t}=\sum_{i=1}^{N-2} \sum_{j=1}^{N-(i+1)}d_{i,i+1,i+1+j} M_{i,i+1,i+1+j}
\end{eqnarray}
where $d_{i,i+1,i+1+j}\in\mathbb{R}$.

%%%%%%%%%%%%%%%%%%%%%%%%%%%%%%%%%%%%%%%%%%%%%%

\subsection{Circulant matrices}

Once we have established that $Ker(\Gamma)$ and $\mathcal{M}$ are isomorphic, we will look for the counterpart of $C_{k}$ in the space $\mathcal{M}$, that is, $\phi(C_{k})$. There is an interesting relationship between  $\phi(C_{k})$ and the circulant matrices. Recalling that a circulant matrix of order $N$, denoted as $circ(\alpha_{0},\alpha_{1},\dots,\alpha_{N-1})$, is a square matrix such that each row is a cyclic permutation of the previous one, namely
\[
circ(\alpha_{0}, \alpha_{1},...,\alpha_{N-1}):=\left(\begin{array}{llll}
\alpha_{0}&\alpha_{1}&\dots&\alpha_{N-1}\\
\alpha_{N-1}&\alpha_{0}&\dots&\alpha_{N-2}\\
\vdots&\vdots&\ddots&\vdots\\
\alpha_{1}&\alpha_{2}&\dots&\alpha_{0}
\end{array}\right)
\] 
It is known that the set of matrices  $\{\Lambda, \dots ,\Lambda^{N}\}$, where $\Lambda=circ(0,1,0,...,0)$ forms a unitary cyclic group which is also a basis for the space of circulant matrices, i.e., 
\begin{eqnarray} \nonumber 
circ(\alpha_{0}, \alpha_{1},...,\alpha_{N-1})=\alpha_{0}I +\alpha_1 \Lambda+\alpha_2 \Lambda^{2}+\dots +\alpha_{N-1 }\Lambda^{N-1}
\end{eqnarray}
and $(\Lambda^{k})^{-1}=(\Lambda^{k})^{*}=\Lambda^{N-k}$  (see \cite{Ingleton} for a further discussion).

Notice that $(\Lambda^{k}-(\Lambda^{k})^t)\in \mathcal{M}$ for all $1\leq k \leq N$, since it is antisymmetric and $\sum_{j}\big(\Lambda^{k}-(\Lambda^{k})^{t}\big)_{ij}=0$, which is clear from
$$
(\Lambda^{k})_{ij}=\left\{ \begin{array}{cc}1& \text{if\, $ j=[i+k]$}\\0&\text{other case}\end{array}\right.
$$
and
\begin{equation}
\big(\Lambda^{k}-(\Lambda^{k})^{t}\big)_{ij}=\left\{\begin{array}{rr}
1& \text{ if\, $j=[i+k]$}\\
-1& \text{ if\, $i=[j+k]$}
\end{array}\right.\label{lambda_k-lambda_kt}
\end{equation}
On the other hand since $\Lambda^{k}-\Lambda^{N-k}=-(\Lambda^{N-k}- \Lambda^{k})$, it is enough to consider the set $\{ \Lambda^{k}-(\Lambda^{k})^{t}: 1\leq k \leq \ell  \}$ where $\ell$ is such that $N=2\ell$ or $N=2\ell +1$.
 
The following proposition states that the image of the $C_{k}$ under $\phi$ is the differences just described.

\begin{proposition}
Let $C_{k}$ be a $k$-Hamiltonian cycle, then $\phi(C_{k})=\Lambda^{k}-(\Lambda^{k})^{t}$.
\end{proposition}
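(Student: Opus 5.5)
The plan is to compare the two matrices entry by entry. Both $\phi(C_k)$ and $\Lambda^{k}-(\Lambda^{k})^{t}$ are antisymmetric, so it suffices to check that they agree on every pair $(i,j)$ with $i<j$.

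First we make the cycle $C_k$ explicit as an element of $Ker(\Gamma)$. By Proposition \ref{com_conn}, $\gcd(k,N)=1$, so the closed path $[k],[2k],\dots,[Nk],[k]$ traverses $N$ distinct edges $\{[mk],[(m+1)k]\}$, $m=1,\dots,N$. As a vector in $\mathbb{R}^{|E|}$, the cycle $C_k$ has coordinate $C_k(\theta(i,j))=+1$ when the edge $(i,j)$ with $i<j$ is traversed from $i$ to $j$, i.e.\ $j=[i+k]$. It has coordinate $-1$ when the edge is traversed from $j$ to $i$, i.e.\ $i=[j+k]$. All other coordinates are $0$. This is the convention under which a cycle lies in $Ker(\Gamma)$: each vertex has one incoming and one outgoing traversal, so the column contributions of $\Gamma$ cancel.

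Next we unfold $\phi$ via (\ref{Phi-M}). For $i<j$, $\phi(C_k)_{ij}=C_k(\theta(i,j))$, which equals $1$ if $j=[i+k]$, $-1$ if $i=[j+k]$, and $0$ otherwise. For $i>j$, $\phi(C_k)_{ij}=-C_k(\theta(j,i))$. Applying the $i<j$ description with the roles of $i$ and $j$ swapped gives again $1$ if $j=[i+k]$ and $-1$ if $i=[j+k]$. These are exactly the entries in (\ref{lambda_k-lambda_kt}).

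The main obstacle is a possible clash of the two cases: for some pair, both $j=[i+k]$ and $i=[j+k]$ could hold, and then the edge would be traversed twice. This happens only when $2k\equiv 0 \pmod N$, which together with $\gcd(k,N)=1$ forces $N\le 2$. That case is excluded, since cycles have length at least three. So each edge of the complete graph is traversed at most once and in only one direction, and the formula of (\ref{lambda_k-lambda_kt}) is unambiguous. We also need the sign convention for $C_k$ (orientation along increasing path index) to match the one used for the basis cycles in (\ref{tray cerradas}). This should be checked against (\ref{Matriz-ciclos}), where $(i,i+1)$ gets $+1$ and the closing edge $(i+1+j,i)$ gets $+1$, i.e.\ forward traversal is positive. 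With that convention the two matrices agree entrywise, which proves the proposition.
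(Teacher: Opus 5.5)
Your proposal is correct and takes essentially the same route as the paper. You describe $C_k$ coordinatewise by the direction in which each edge is traversed, unfold $\phi$ via (\ref{Phi-M}), and compare entrywise with (\ref{lambda_k-lambda_kt}); your observation that every step goes from a vertex $v$ to $[v+k]$ is cleaner than the paper's intermediate claim that $[mk]<[(m+1)k]$ exactly when $m\le N-k$ (valid only for $k=1$), and your exclusion of $2k\equiv 0 \pmod N$ makes explicit a point the paper leaves implicit.
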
  
\begin{proof}
Since the cycle $\mathcal{C}_{k}=[k],[2k],\dots,[Nk],[k]$ is Hamiltonian; this cycle has the sequence of edges $([k],[2k]),\dots, ([Nk],[(N+1)k])=\{([ik], [(i+1)k]): \, 1\leq i\leq N\}$,
hence, the cycle $C_{k}$  has $1$ in its $r$-th entry if  $[ik]<[(i+1)k]$ and $\theta([ik],[(i+1)k])=r$, i.e., the edge $([ik],[(i+1)k])$ is tra\-ve\-led in the predetermined direction, and has $-1$ if $[(i+1)k]<[ik]$ and $\theta([(i+1)k],[ik])=r$, this is, the edge $([ik],[(i+1)k])$ is traveled in the opposite direction, and $0$ in other case. Now, $[ik]<[(i+1)k]$ if and only if $[(i+1)k]\leq N$, i.e., $i\leq N-k$ and $[(i+1)k]<[ik]$ if and only if $i>N-k$, so  
\[
C_{k}(r)=\left\{ \begin{array}{rl} 1& if\,\,  \theta([ik],[(i+1)k])=r,\, \text{for }\, 1\leq i\leq N-k\\
-1& if\,\,  \theta([(i+1)k],[ik])=r, \, \text{for }\, N-k+1 \leq i\leq N\\
0,&\textrm{other case}\end{array}\right.
\]
Hence  
\[
\phi(C_{k})_{i,j}=\left\{\begin{array}{r}
C_{k}(\theta(i,j)),\, \text{for $i<j$}\, =\left\{\begin{array}{rc}1& \text{if $j=[i+k]$,}\,\,  1\leq i\leq N-k\\-1 & \text{if $i=[j+k],$}\,\, N-k+1\leq j\leq N\end{array}\right.\\
-C_{k}(\theta(j,i)),\, \text{for $i>j$}\,=\left\{\begin{array}{rc}-1& \text{if $i=[j+k]$,}\,\,  1\leq j\leq N-k\\1 & \text{if $j=[i+k],$}\,\, N-k+1\leq i\leq N\end{array}\right.
\end{array}\right. 
\] 
\begin{eqnarray}
=\left\{\begin{array}{rr}
1&\text{if \quad$j=[i+k]$}\\
-1&\text{if \quad$i=[j+k]$}\end{array}\right. \label{Matrix-C-k}
\end{eqnarray}
which is $(\ref{lambda_k-lambda_kt})$.
\end{proof}   
The above result suggests the following definition. 
  
\begin{definition}
Let be a continuous Markov Chain with finite state space $S$, infinitesimal generator $Q$ and invariant distribution $\pi$, we say that $\pi$ is a $k$-non-equilibrium invariant distribution if
\begin{equation}
\Pi Q-(\Pi Q)^{t}=d\big(\Lambda^{k}-(\Lambda^{k})^t\big)\label{k-non_eq}
\end{equation}  
for some $d\in\mathbb{R}$ different from zero.
\end{definition}   
This kind of invariant distributions have been yet considered in the quantum case (see \cite{Marco-Fer-Julio}),
where the  right-side of $(\ref{k-non_eq})$ takes the form $d \sum_{k=1}^{l}\big(\Lambda^{k}-(\Lambda^{k})^t\big)$, these invariant measures   are called
\textit{uniform and completely non equilibrium}.
 
The entries of the matrix equation (\ref{k-non_eq}) are  
\begin{equation}
(\pi_{i}q_{ij}- \pi_{j}q_{ji})_{ij}=\left\{\begin{array}{rl	}
d& \text{ if\, $j=[i+k]$}\\
-d& \text{ if\, $i=[j+k]$}\\
0& \text{other case.}
\end{array}\right.\label{currents-coordinates}
\end{equation}
In the following section we shall describe the Markov chains with an invariant distribution that satisfies (\ref{k-non_eq}) for $k=1$.

\section{$1$-non-equilibrium invariant distribution}

\noindent In this section we shall give an explicitly description of the $1$-non-equilibrium invariant distribution. Since $\Lambda-\Lambda^{t}$ is antisymmetric, it is enough consider only the upper part in (\ref{k-non_eq}), i.e., for $i<j$ we have
 
\begin{equation}\label{Pi-currents-d}
\pi_{i}q_{ij}- \pi_{j}q_{ji}=\left\{\begin{array}{rl}
d& \text{ if\, $j=[i+k]$\ and  $1\leq i \leq N-1$}\\
-d& \text{ if\, $i=[j+k]$\ and $j>N-k$}\\
0& \text{other case}
\end{array}\right.
\end{equation}
So, for $k=1$, the system (\ref{Pi-currents-d}) takes the form

\begin{eqnarray}\label{System-Delta}
 \left(
  \begin{array}{ccccc}
 q_{1,2}&-q_{ 2,1}& 0&\cdots&0\\
  0 &q_{2, 3}&-q_{3,2}&\cdots&0\\
    \vdots &\vdots& \cdots&\ddots&\vdots\\
    %1 &1& 0&0&0 \\
  0&0&\cdots &   q_{N-1, N}&-q_{ N, N-1} \\
    q_{1, N}& \cdots & \cdots &0&-q_{ N, 1} \\
  \end{array}
\right)
\left( \begin{array}{c}
\pi_{1}\\
\pi_{2}\\
\vdots  \\
\pi_{N-1} \\
\pi_{N} \\
\end{array} \right)= \left( \begin{array}{r}
d\\
d\\
\vdots  \\
d \\
-d  \\
\end{array} \right), 
\end{eqnarray}
for short, we simply refer to the above system as $\Delta \pi^{t}=\bar{d}$.

\begin{lemma}
The determinant of the matrix  $\Delta$ in $(\ref{System-Delta})$ is  

\begin{eqnarray}\label{det-Delta}
det(\Delta)=q_{1, N}\prod _{i=1}^{N-1} q_{i+1, i} -q_{ N, 1}\prod _{i=1}^{N-1} q_{i, i+1}
\end{eqnarray}

\end{lemma}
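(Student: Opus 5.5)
The plan is a Laplace (cofactor) expansion of $\det(\Delta)$ along the first column. Reading off $(\ref{System-Delta})$, the first column of $\Delta$ has only two nonzero entries: $\Delta_{1,1}=q_{1,2}$ and $\Delta_{N,1}=q_{1,N}$. So
\[
\det(\Delta)=q_{1,2}\det(\Delta^{(1,1)})+(-1)^{N+1}q_{1,N}\det(\Delta^{(N,1)}),
\]
where $\Delta^{(r,s)}$ denotes the minor obtained by deleting row $r$ and column $s$. Both minors should turn out to be triangular, so each determinant is just a product of diagonal entries. I will assume $N\geq 3$, so that the last row has exactly the two displayed entries; the case $N=2$ is degenerate, since then $q_{1,N}=q_{1,2}$.

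First minor. Deleting row $1$ and column $1$ leaves rows $2,\dots,N$ and columns $2,\dots,N$.
\begin{itemize}
\item For $2\leq i\leq N-1$, row $i$ has $q_{i,i+1}$ in column $i$ and $-q_{i+1,i}$ in column $i+1$, so these rows are upper bidiagonal in the new indexing.
\item The last row, once its first entry is removed, contains only $-q_{N,1}$ in column $N$.
\end{itemize}
Hence $\Delta^{(1,1)}$ is upper triangular with diagonal $q_{2,3},\dots,q_{N-1,N},-q_{N,1}$. The first term of the expansion is therefore
\[
q_{1,2}\Big(\prod_{i=2}^{N-1}q_{i,i+1}\Big)(-q_{N,1})=-q_{N,1}\prod_{i=1}^{N-1}q_{i,i+1}.
\]

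Second minor. Deleting row $N$ and column $1$ leaves rows $1,\dots,N-1$ and columns $2,\dots,N$. In this submatrix, row $i$ has $-q_{i+1,i}$ in position $(i,i)$ and $q_{i,i+1}$ in position $(i,i-1)$, the latter only for $i\geq 2$. So the submatrix is lower triangular with diagonal $-q_{2,1},\dots,-q_{N,N-1}$, and its determinant is $(-1)^{N-1}\prod_{i=1}^{N-1}q_{i+1,i}$. Combined with the cofactor sign $(-1)^{N+1}$, the total sign is $(-1)^{2N}=1$. The second term is therefore $q_{1,N}\prod_{i=1}^{N-1}q_{i+1,i}$.

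Adding the two terms gives exactly $(\ref{det-Delta})$. The only delicate point is the sign bookkeeping in the second cofactor: the cofactor sign $(-1)^{N+1}$ and the $N-1$ negative diagonal entries must cancel. A careless count there would produce a spurious $(-1)^N$, so I would verify it against the case $N=3$ by direct computation.
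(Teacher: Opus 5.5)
Your proof is correct and follows the paper's own argument: a Laplace expansion along the first column, where the $(1,1)$ minor is upper triangular, the $(N,1)$ minor is lower triangular, and the signs $(-1)^{N+1}$ and $(-1)^{N-1}$ cancel. Your second term correctly carries the factor $q_{1,N}$. The paper's displayed intermediate expression has $q_{N,1}$ in that position, which is a typo, so your bookkeeping is actually the cleaner of the two.
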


\begin{proof}
The result follows by using the Laplace expansion along the first column and by noticing that in such an expansion we get first an upper triangular matrix followed by a lower triangular matrix, so $$det(\Delta)=- q_{1, 2} \prod _{i=2}^{N-1} q_{i, i+1} q_{ N, 1} + (-1)^{N+1}q_{ N, 1}\prod _{i=1}^{N-1} q_{i+1, i}(-1)^{N-1}.$$ 
Simplifying we get the result.
\end{proof}
\begin{remark} Due to the non-equilibrium hypothesis, $det(\Delta)\neq 0$; since $det(\Delta)= 0$ implies 
$\frac{q_{N, 1}}{q_{1, N}} =\prod _{i=1}^{N-1}\frac{ q_{ i+1, i}}{ q_{i, i+1}}$, i.e., the  Kolmogorov's  reversibility criterion for the cycle $C_{1}$ (see \cite{Durrett}). 
\end{remark}
Next, we consider the system (\ref{Pi-currents-d}) along with the distribution condition $(\sum \pi_{i}=1)$, 
\begin{eqnarray}\label{Augmented-system}
\left(
  \begin{array}{crccc}
  1&1&\cdots &\cdots &1\\
 q_{1, 2}&-q_{ 2, 1}& 0&\cdots&0\\
  0 &q_{2, 3}&-q_{ 3, 2}&\cdots&0\\
    \vdots &\vdots& \cdots&\ddots&\vdots\\
    %1 &1& 0&0&0 \\
  0&0&\cdots &   q_{ N-1, N}&-q_{ N, N-1} \\
    q_{1, N}& \cdots & \cdots &0&-q_{ N, 1} \\
  \end{array}
\right)
\left( \begin{array}{c}
\pi_{1}\\
\pi_{2}\\
\vdots  \\
\pi_{N-1} \\
\pi_{N} \\
\end{array} \right)
=\left(\begin{array}{r}1\\d\\ \vdots \\  d\\-d
\end{array}\right) 
\end{eqnarray}
The {\footnotesize{${(N+1)\times (N+1)}$}} augmented matrix of the above system is
\begin{eqnarray}\label{Augmented-matrix}
\left(
  \begin{array}{crcccr}
  1&1&\cdots &\cdots &1&1\\
 q_{1,2}&-q_{ 2, 1}& 0&\cdots&0&d\\
  0 &q_{2, 3}&-q_{ 3, 2}&\cdots&0&d\\
    \vdots &\vdots& \cdots&\ddots&\vdots& \vdots\\
    %1 &1& 0&0&0 \\
  0&0&\cdots &   q_{ N-1, N}&-q_{ N, N-1}&d \\
    q_{1, N}& \cdots & \cdots &0&-q_{ N, 1} &-d\\
  \end{array}
\right)
\end{eqnarray}
\begin{remark}\label{Determinante-Delta}
Notice that the rank of the above matrix is at least $N$, since the minor $\Delta_{1,N+1}$ is exactly the matrix $\Delta$  in  $(\ref{System-Delta})$.
\end{remark}
\begin{theorem}
Given a continuous-time Markov chain with finite state space $S$, infinitesimal generator $Q$ and $1$-non-equilibrium invariant distribution $\pi$, that is
\begin{eqnarray}\label{1-non-equilibrium}
\Pi Q-(\Pi Q)^{t}=d(\Lambda -\Lambda^{t})
\end{eqnarray}
then, the coordinates of $\pi$ are
\begin{eqnarray}
\pi_{1}=\Big(1+d\sum_{n=2}^{N}\sum_{i=1}^{n-1}\frac{1}{q_{i+1,i}}\prod_{j=i+1}^{n-1}\frac{q_{j,j+1}}{q_{j+1,j}}\Big)\Big(1+ \sum_{n=2}^{N}\prod_{i=1}^{n-1}\frac{q_{i,i+1}}{q_{i+1,i}}\Big)^{-1},\nonumber
\end{eqnarray}
for $2\leq n \leq N$
\begin{eqnarray}\label{pi-non-sim}
\pi_{n}=\pi_{1}\prod_{i=1}^{n-1}\frac{q_{i,i+1}}{q_{i+1,i}}-d\sum_{i=1}^{n-1}\frac{1}{q_{i+1,i}}\prod_{j=i+1}^{n-1}\frac{q_{j,j+1}}{q_{j+1,j}}
\end{eqnarray}
and the unique value $d$ for which $\pi$ is invariant, has the form
\begin{eqnarray}\label{d-value}
d=\frac{(-1)^{N+1}\Delta_{1,N+1}} { \sum_{i=2}^{N} (-1)^{N+1+i} \Delta_{i,N+1}- \Delta_{N+1,N+1}},
\end{eqnarray}
where $\Delta_{j,k}$ is the minor of the augmented matrix in $(\ref{Augmented-matrix})$.

\end{theorem}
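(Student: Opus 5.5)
The plan is to solve the bidiagonal part of the system $(\ref{System-Delta})$ by forward substitution, normalize, and then use the remaining row together with the normalization to determine $d$ via Cramer's rule on the augmented matrix $(\ref{Augmented-matrix})$.

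\textbf{Step 1: recursion.} By $(\ref{currents-coordinates})$ with $k=1$, the first $N-1$ rows of $\Delta\pi^{t}=\bar d$ read $\pi_{n}q_{n,n+1}-\pi_{n+1}q_{n+1,n}=d$ for $1\le n\le N-1$. Since $q_{n+1,n}>0$, this gives the first-order linear recursion
\[
\pi_{n+1}=\frac{q_{n,n+1}}{q_{n+1,n}}\,\pi_{n}-\frac{d}{q_{n+1,n}} .
\]

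\textbf{Step 2: closed form for $\pi_n$.} Iterating from $\pi_1$ gives, for $2\le n\le N$,
\[
\pi_{n}=\pi_{1}\prod_{i=1}^{n-1}\frac{q_{i,i+1}}{q_{i+1,i}}-d\sum_{i=1}^{n-1}\frac{1}{q_{i+1,i}}\prod_{j=i+1}^{n-1}\frac{q_{j,j+1}}{q_{j+1,j}},
\]
with empty products equal to $1$. I would verify this by induction on $n$. The inductive step multiplies the formula for $\pi_n$ by $q_{n,n+1}/q_{n+1,n}$; this extends every product up to index $n$. The new term $-d/q_{n+1,n}$ supplies the $i=n$ summand. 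This establishes $(\ref{pi-non-sim})$.

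\textbf{Step 3: formula for $\pi_1$.} Substitute the expression for $\pi_n$ into the normalization $\sum_n\pi_n=1$. This is linear in $\pi_1$, with coefficient $1+\sum_{n=2}^{N}\prod_{i=1}^{n-1}q_{i,i+1}/q_{i+1,i}>0$, so one can divide by it. Solving yields the stated expression for $\pi_1$ in terms of $d$.

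\textbf{Step 4: determining $d$.} Only the first $N-1$ rows of $\Delta$ have been used so far. The last row, $q_{1,N}\pi_1-q_{N,1}\pi_N=-d$, is the remaining constraint. The cleanest route is not to substitute but to argue as follows. The system $(\ref{Augmented-system})$ has $N+1$ equations in $N$ unknowns, and it is consistent because $\pi$ exists by hypothesis. Hence the $(N+1)\times(N+1)$ augmented matrix $(\ref{Augmented-matrix})$ is singular. Expand its determinant along the last column, whose entries are $1,d,\dots,d,-d$. This gives
\[
(-1)^{N+2}\Delta_{1,N+1}+d\sum_{i=2}^{N}(-1)^{i+N+1}\Delta_{i,N+1}-d\,\Delta_{N+1,N+1}=0 .
\]
The minors $\Delta_{i,N+1}$ do not involve the last column, so they are independent of $d$. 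The equation is therefore linear in $d$, and solving it gives $(\ref{d-value})$ after adjusting signs.

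\textbf{Main obstacle.} The delicate point is uniqueness and well-definedness: the coefficient of $d$ must be nonzero. I would try to show that if this coefficient vanished, then singularity of the augmented matrix would force $\Delta_{1,N+1}=\det\Delta=0$. By Remark \ref{Determinante-Delta} and the preceding remark, $\det\Delta\neq0$ under the non-equilibrium (Kolmogorov-violating) hypothesis, a contradiction. Then $d$ is uniquely determined, and $d\neq0$ follows from $\Delta_{1,N+1}\neq0$. Keeping the sign bookkeeping in the cofactor expansion consistent with the paper's convention $(-1)^{N+1}$ is the step requiring the most care.
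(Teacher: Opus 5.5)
Your proposal is correct and follows the paper's proof essentially step for step. Both arguments solve the bidiagonal rows by forward substitution, normalize to get $\pi_1$, and then use the singularity of the augmented matrix (\ref{Augmented-matrix}), expanded along its last column, with $\Delta_{1,N+1}=\det\Delta\neq 0$ guaranteeing that the coefficient of $d$ is nonzero.

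Your induction makes the paper's ``$=\cdots=$'' iteration rigorous, and it avoids the paper's index slip of writing $\pi_{n+1}$ with products up to $n-1$. Your contradiction argument for the nonzero coefficient is also slightly cleaner, since it does not invoke $d\neq 0$ at that step.
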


\begin{proof}
Since the distribution is 1-non-equilibrium, it satisfies $(\ref{System-Delta})$, i.e., 
\begin{eqnarray}\nonumber
\pi_{n}q_{n,n+1}-\pi_{n+1}q_{n+1,n}&=&d,\quad \text{for}\  n=1,\dots ,N-1 \\ 
\pi_{1}q_{1,N}-\pi_{N}q_{N,1}&=&-d\label{1-jump-db} 
\end{eqnarray}
clearing the first one equation above, we get 
\begin{eqnarray}\nonumber
\pi_{n+1}&=&\pi_{n}\frac{q_{n,n+1}}{q_{n+1,n}}-\frac{d}{q_{n+1,n}}
\end{eqnarray}
recursively, 
\begin{eqnarray}\nonumber
\pi_{n+1}&=&\pi_{n}\frac{q_{n,n+1}}{q_{n+1,n}}-\frac{d}{q_{n+1,n}}=\left( \pi_{n-1}\frac{q_{n-1,n}}{q_{n,n-1}}-\frac{d}{q_{n,n-1}} \right)\frac{q_{n,n+1}}{q_{n+1,n}}-\frac{d}{q_{n+1,n}} \\
&=& \cdots = \pi_{1}\prod_{i=1}^{n-1}\frac{q_{i,i+1}}{q_{i+1,i}}-d\sum_{i=1}^{n-1}\frac{1}{q_{i+1,i}}\prod_{j=i+1}^{n-1}
\frac{q_{j,j+1}}{q_{j+1,j}}
\end{eqnarray}
By using the distribution condition
\begin{eqnarray}
1=\pi_{1}\Big(1+ \sum_{n=2}^{N}\prod_{i=1}^{n-1}\frac{q_{i,i+1}}{q_{i+1,i}}\Big)-d\sum_{n=2}^{N}\sum_{i=1}^{n-1}
\frac{1}{q_{i+1,i}}\prod_{j=i+1}^{n-1}\frac{q_{j,j+1}}{q_{j+1,j}},\nonumber
\end{eqnarray}
clearing the above equation, we get the expression for $\pi_{1}$. Finally to determine the unique value of $d$ such that $\pi$ is an invariant distribution, we proceed as follows.

In order to the vector $(1,d,\dots,d,-d)^{t}$ belongs to the rank of the augmented matrix $(\ref{Augmented-matrix})$, it is necessary that its determinant be equal to zero, so de\-ve\-lo\-ping it along the $N+1$ column, we have
\begin{eqnarray}
0&=&(-1)^{N+2}\Delta_{1,N+1}+d \sum_{i=2}^{N}(-1)^{N+1+i}\Delta_{i,N+1}- d(-1)^{N+1+N+1}\Delta_{N+1,N+1}\nonumber \\
&=&(-1)^{N+2}\Delta_{1,N+1}+d\Big( \sum_{i=2}^{N}(-1)^{N+1+i}\Delta_{i,N+1}-\Delta_{N+1,N+1}\Big)
\end{eqnarray}
i.e., $(-1)^{N+1}\Delta_{1,N+1}=d\Big( \sum_{i=2}^{N}(-1)^{N+1+i}\Delta_{i,N+1}-\Delta_{N+1,N+1}\Big)$. 

By remark $(\ref{Determinante-Delta})$,  $\Delta_{1,N+1}\neq 0$ and by hypothesis $d\neq 0$, then \newline $\Big( \sum_{i=2}^{N}(-1)^{N+1+i}\Delta_{i,N+1}-\Delta_{N+1,N+1}\Big)\neq 0$, hence, by clearing $d$ we have the result. 
\end{proof}
We finish with an example. 

\begin{example}
Let us consider the four states Markov chain, with infinitesimal generator 
\[
Q=\left(\begin{array}{cccc}
q_{11}&q_{12}&q_{13}&q_{14}\\
q_{21}&q_{22}&q_{23}&q_{24}\\
q_{31}&q_{32}&q_{33}&q_{34}\\
q_{41}&q_{42}&q_{43}&q_{44}
\end{array}\right),\qquad q_{ii}=-\sum_{j:j\neq i}q_{ij}.
\]

\begin{figure}[h]
\[\xymatrix{ *+[o][F]{1} \ar[rr] \ar[rdrd] \ar[dd] & &*+[o][F]{2} \ar[ldld] \ar[dd] \\ & & \\
*+[o][F]{4} & & *+[o][F]{3} \ar[ll] }  \]
\caption{Interaction graph.}
\end{figure}
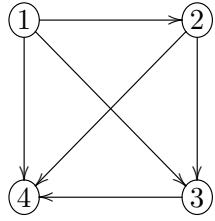 
Its incidence matrix has the form 
\[
\Gamma=\left( \begin{array}{rrrrrr}
-1&-1&-1&0 & 0 &0\\
1 & 0& 0&-1& -1&0\\
0 & 1& 0& 1& 0&-1\\
0 & 0& 1&0& 1&1\\
\end{array}\right).
\]
From $(\ref{tray cerradas})$, a basis for $Ker(\Gamma)$ is the set of cycles $ \{C_{(1,2,3)}, C_{(1,2,4)}, C_{(2,3,4)} \}$, where   
\[
C_{(1,2,3)}=\left(\begin{array}{r}
1\\-1\\0\\1\\0\\0
\end{array}  \right),\qquad 
C_{(1,2,4)}=\left(\begin{array}{r}
1\\0\\-1\\0\\1\\0
\end{array}  \right),\qquad 
C_{(2,3,4)}=\left(\begin{array}{r}
0\\0\\0\\1\\-1\\1
\end{array}  \right)
\]
and, from above discussed, a basis for $\mathcal{M}$ is the set of cycle matrices  

\begin{displaymath}
M_{1,2,3}=\left( \begin{array}{rrrr}
0&1&-1&0\\
-1&0&1&0\\
1& -1& 0&0\\
0 & 0&0&0\\
\end{array}\right),\quad 
M_{1,2,4}=\left( \begin{array}{rrrr}
0&1&0&-1\\
-1&0&0&1\\
0&0& 0&0\\
1&-1&0&0\\
\end{array}\right),
\end{displaymath}
 
\begin{displaymath}
M_{2,3,4}=\left( \begin{array}{rrrr}
0&0&0&0\\
0&0&1&-1\\
0& -1& 0&1\\
0 &1&-1&0\\
\end{array}\right).
\end{displaymath}
If the chain has $1$-non-equilibrium distribution, it satisfies $(\ref{1-non-equilibrium})$, where \newline $\Lambda-\Lambda^{t}=$
\begin{equation}
\left( \begin{array}{rrrr}
0&1&0&0\\
0&0&1&0\\
0& 0& 0&1\\
1 &0&0&0\\
\end{array}\right)-\left( \begin{array}{rrrr}
0&0&0&1\\
1&0&0&0\\
0&1&0&0\\
0&0&1&0\\
\end{array}\right)=\left( \begin{array}{rrrr}
0&1&0&-1\\
-1&0&1&0\\
0&-1&0&1\\
1&0&-1&0\\\end{array}\right).
\end{equation}
Notice that  $\Lambda-\Lambda^{t}= M_{1,2,4}+M_{2,3,4}$.
\end{example}

\bigskip

\noindent \textbf{Conclusions and Perspectives}
\vspace{0.5cm}

\noindent We have introduced the concept of cycle matrices to express the matrix $D$, which describes the non-equilibrium in a Markov chain as a linear combination of them. The $k$-non-equilibrium invariant distribution has been completely described when $D$ takes the form $d\,\phi(C_{1})$, where $C_{1}$ is a Hamiltonian cycle.

The next step is to explicitly describe the $k$-non-equilibrium invariant distribution for $k$ greater than $1$. Additionally, we aim to extend this analysis to describe non-equilibrium invariant distributions for more general and sufficiently interesting cycles, as well as for linear combinations of them.

\end{document}